\setlist{nosep}
\theoremstyle{definition}
\newtheorem{defin}{Definition}[section]
\theoremstyle{plain}
\newtheorem{thm}[defin]{Theorem}
\newtheorem{pro}[defin]{Proposition}
\newtheorem{cor}[defin]{Corollary}
\theoremstyle{definition}
\newtheorem{rem}[defin]{Remark}
\newcommand{\N}{\mathbb{N}}
\newcommand{\Z}{\mathbb{Z}}
\newcommand{\R}{\mathbb{R}}
\renewcommand{\phi}{\varphi}
\newcommand{\sinc}{\text{sinc}}
\numberwithin{equation}{section}
\newcommand{\dsp}{\displaystyle}
\renewcommand\labelenumi{\emph{(\roman{enumi})}}
\renewcommand\theenumi\labelenumi
\fillast \fontsize{12}{15}\scshape}{\thesection.}{0.8em}{}
\fillast \fontsize{11}{12}\scshape}{\thesubsection.}{0.8em}{}
\begin{document}
	
\thispagestyle{plain}

\begin{center}
	\large
	{\uppercase{\bf Curves defined by a class of discrete operators: approximation result and applications}} \\ 
	\vspace*{0.5cm}
	{\scshape{Rosario Corso}$^{a,*}$, Gabriele Gucciardi$^b$}\\
	\vspace*{0.1cm}
	{\it $^a$Dipartimento di Matematica e Informatica, Università degli Studi di Palermo, 
		Palermo, Italy,}  rosario.corso02@unipa.it ($^*$corresponding author)\\
	\vspace*{0.1cm}
	{\it $^b$Università degli Studi di Palermo,  
		Palermo, Italy,} gabriele.gucciardi02@community.unipa.it
	
\end{center}

\normalsize 
\vspace*{1cm}	

\small 

\begin{minipage}{13.5cm}
	{\scshape Abstract.} In approximation theory classical discrete operators, like generalized sampling, Sz\'{a}sz-Mirak'jan, Baskakov and Bernstein operators, have been extensively studied for scalar functions. In this paper, we look at the approximation of curves by a class of discrete operators 
	and we exhibit graphical examples concerning several cases. The topic has useful implications about the computer graphics and the image processing: we discuss applications on the approximation and the reconstruction of curves in images.
\end{minipage}

\vspace*{.5cm}

\begin{minipage}{13.5cm}
	{\scshape Keywords:}  curves, discrete operators, sampling operators, approximation, images.
\end{minipage}

\vspace*{.5cm}

\begin{minipage}{13.5cm}
	{\scshape MSC (2020):} 94A20, 41A35, 65D17, 94A08.
\end{minipage}

\vspace*{0.5cm}
\normalsize

\section{Introduction}

Nowadays, several type of operators have been investigated to approximate bounded and continuous real valued functions of one variable. Here, for approximation we mean that a family of operators $\{S_n\}_{n\in \N}$, acting on some function space, has the property that 
\begin{equation}
	\label{eq_appr}
	\lim_{n\to +\infty}(S_n f)(t)=f(t), 
\end{equation}
when $f:I\to \R$ is a bounded continuous function defined on a interval $I\subset\R$ (not necessarily bounded) and $t\in I$. A typical framework consists of discrete operators
\begin{equation}
	\label{def_oper}
	(S_n f)(t)= \sum_{k\in J} f(\nu_{n,k}) K_{n,k}(t), \qquad f:I\to \R, \; t\in I
\end{equation}
defined in terms of the samples $f(\nu_{n,k})$ of $f$ in the point $\nu_{n,k}$ and the elementary functions $K_{n,k}$, where $n\in \N^+$ and $k$ varies in a finite or countable set $J$.
Classical examples include the generalized sampling operators \cite{BSS,RS,Sp} in the case $I=\R$, the Sz\'{a}sz-Mirak'jan \cite{Mirakjan,Szasz} and the Baskakov operators \cite{Baskakov} in the case $I=[0,+\infty[$ and the Bernstein operators \cite{L} for $I=[0,1]$. 

Since a vector-valued function $\gamma:I\to \R^d$, $d>1$, of one variable  is made up of $d$ scalar functions, i.e. the components, the operators above can be used also to approximate vector-valued functions, acting individually in each component.   
Using a common terminology, with {\it curve} we refer to a continuous vector-valued function on an interval. 

We mention that some of the previous listed operators have been considered for vector-valued functions in \cite{AG,Campiti,N,Tachev} and, moreover, Bernstein operators are at the base of the definition of the Bézier curves \cite{Farin}.

Taking into account the idea of approximation by components, in this paper we look at a class of discrete operators, their property of curves approximation and various supporting examples. Furthermore, we discuss some consequent applications in the context of computer graphics \cite{Farin} and image processing \cite{IP}. Nowadays, these fields play a crucial role in modern technology: computer graphics studies methods to generate and visualize images; image processing investigates operations for enhancing images or for extracting useful information from them. 
Even though a curve in an image is represented by a finite number of points, it can be thought as discretization of a curve in the real plane; hence, we can exploit the operators for its approximation. Anyway, other processes can be done for curves in images starting by the approximations, for instance the reconstruction with an increased resolution or affine transformations. We stress that performing these operations on the approximated curve (which possesses a mathematical expression defined by a continuous, and not discrete, variable) gives better results than those obtained applying the operations directly on the images.

Coming back to the theoric aspects, we state the convergence result for curves, considering a class of discrete operators $\{S_n\}_{n\in \N}$ in a general framework in which the generalized sampling, Sz\'{a}sz-Mirak'jan, Baskakov and Bernstein operators fit. In details, our setting about $\{S_n\}_{n\in \N}$ follows, with some modifications, the assumptions adopted in \cite{BM}. 
Let $I\subset \R$ be an interval (bounded or not), $n\in \N^+$, $J\subset \Z$ an at most countable index set and $\Gamma_n=(\nu_{n,k})_{k\in J}\subset I$ a sequence of points such that 
\begin{equation}
	\label{v_nk}
	\lambda_n <\nu_{n,k+1}-\nu_{n,k} \leq \Lambda_n \text{ for every }k\in J,
\end{equation}
where $\lambda_n ,\Lambda_n >0$ and $\dsp \lim_{n\to +\infty} \lambda_n=\lim_{n\to +\infty} \Lambda_n=0$. We consider a family of continuous functions $\{K_{n,k}\}_{n\in \N^+,k\in J}$, $K_{n,k}:I\to \R$, satisfying the following conditions
\begin{equation}
	\label{partition}
	\sum_{k\in J} K_{n,k}(t)=1 \text{ for every $n\in \N^+$ and $t\in I$},
\end{equation}
\begin{equation}
	\label{moment_uniform}
	 \sum_{k\in J} \left | K_{n,k}(t)\right |\text{ converges uniformly on compact sets of $I$ for every $n\in \N^+$},
\end{equation}
\begin{equation}
	\label{moment_bound}
\text{there exists $M_0$ such that }
	m_0(n):=\sup_{t\in I} \sum_{k\in J}\left | K_{n,k}(t)\right |  <M_0\text{ for every $n\in \N^+$},
\end{equation}
and 
\begin{equation}
	\label{limit_moment}
	 \lim_{n\to +\infty}\sum_{k\in J,|\nu_{n,k}-t|\geq \delta} \left | K_{n,k}(t)\right |=0 \text{ for every $\delta>0$ and $t\in I$}. 
\end{equation}
Under this setup, for every $n\in \N^+$ we define the operator $S_n$ as in \eqref{def_oper}. By the assumptions made, the convergence \eqref{eq_appr} for bounded and continuous real valued functions holds (Theorem \ref{thm_approx}) and it implies an analogous result (Corollary \ref{cor_appr_curves}), i.e. $\dsp \lim_{n\to +\infty} (S_n \gamma)(t) =\gamma(t)$,  for bounded curves $\gamma:I\to \R^d$, $\gamma(t)=(x_1(t),\dots,x_d(t))$, by defining $$(S_n \gamma)(t):=((S_n x_1)(t),\dots,(S_n x_d)(t)), \quad t\in I.$$

The paper is organized as follows. Section \ref{sec_thm} is devoted to the approximation results related to $\{S_n\}_{n\in \N}$ for real valued functions and for curves. In Section \ref{sec_spec} we show in more details that the generalized sampling, Sz\'{a}sz-Mirak'jan, Baskakov and Bernstein operators are special cases of our setting. Some explicit examples of approximations of curves in various cases (2D or 3D space, open or closed curves) are treated in Section \ref{sec_exm}. Finally, we discuss the applications about the approximation and the reconstruction of curves in images in Section \ref{sec_appl} providing methods and examples. 

\section{Approximation results}
\label{sec_thm}

In this section we state the main approximation results for the operators $S_n$, firstly for scalar functions and then for curves. For a bounded real function on a interval $I$ we write $\|f\|_\infty:=\sup_{t\in I}|f(t)|$, while for a bounded function $\gamma:I\to \R^d$, with components $x_1,\dots,x_d$, we write $\dsp \|\gamma\|_\infty:=\max_{i=1,\dots,d}(\|x_i\|_\infty)$. 
As preliminary note, we remark the following properties. 

\begin{pro} 
	\label{lem_approx}
	Let $f:I\to \R$ be a bounded function and $n\in \N^+$. The function $S_nf:I\to \R$ is well-defined, continuous and bounded.
\end{pro}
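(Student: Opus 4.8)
The claim has three parts, and I would treat them in the order: well-definedness, boundedness, continuity, since the estimates for the first two are essentially the same and feed into the third.

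For well-definedness, I must show that for each fixed $t\in I$ the series $\sum_{k\in J} f(\nu_{n,k}) K_{n,k}(t)$ converges. Since $f$ is bounded, $|f(\nu_{n,k})|\leq \|f\|_\infty$ for every $k$, so the general term is dominated in absolute value by $\|f\|_\infty |K_{n,k}(t)|$. By assumption \eqref{moment_bound} the series $\sum_{k\in J} |K_{n,k}(t)|$ converges (its sum is at most $m_0(n)<M_0$), so the series defining $(S_n f)(t)$ converges absolutely by comparison. This simultaneously yields boundedness: for every $t\in I$,
\begin{equation*}
	|(S_n f)(t)| \leq \sum_{k\in J} |f(\nu_{n,k})|\,|K_{n,k}(t)| \leq \|f\|_\infty \sum_{k\in J} |K_{n,k}(t)| \leq M_0 \|f\|_\infty,
\end{equation*}
so $\|S_n f\|_\infty \leq M_0\|f\|_\infty < +\infty$.

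For continuity, I would fix a compact set $[a,b]\subset I$ and show $S_n f$ is continuous there; since $t\in I$ is arbitrary and $I$ is an interval, local continuity on compact neighbourhoods suffices. The idea is to apply the uniform-limit theorem: each partial sum $\sum_{k\in F} f(\nu_{n,k}) K_{n,k}$ over a finite $F\subset J$ is continuous (a finite sum of continuous functions, as each $K_{n,k}$ is continuous by hypothesis), and I want to conclude that the full series is continuous by establishing uniform convergence of these partial sums on $[a,b]$. The bound $|f(\nu_{n,k}) K_{n,k}(t)| \leq \|f\|_\infty |K_{n,k}(t)|$ together with assumption \eqref{moment_uniform} — uniform convergence of $\sum_{k\in J} |K_{n,k}(t)|$ on compact sets — gives, via a Weierstrass-type $M$-test argument, that the tail $\sum_{k\notin F} f(\nu_{n,k}) K_{n,k}(t)$ is uniformly small on $[a,b]$ once $F$ is large enough. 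Hence $S_n f$ is a uniform limit on $[a,b]$ of continuous functions, and is therefore continuous on $[a,b]$, whence on all of $I$.

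The main obstacle, and the only place requiring care, is the continuity step: if $J$ is infinite, I cannot simply invoke finite sums, and I must genuinely exploit assumption \eqref{moment_uniform} rather than merely \eqref{moment_bound}. The subtlety is that uniform convergence of the series of absolute values $\sum_k |K_{n,k}|$ is a strictly stronger hypothesis than pointwise boundedness of its sum, and it is exactly what upgrades the comparison estimate into a uniform one; the argument must be phrased so that the chosen finite set $F$ controlling the tail depends only on $[a,b]$ and the prescribed tolerance, not on the particular $t\in[a,b]$.
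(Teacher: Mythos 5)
Your proposal is correct and follows essentially the same route as the paper: well-definedness and boundedness from the estimate $|(S_nf)(t)|\le\|f\|_\infty\sum_k|K_{n,k}(t)|\le M_0\|f\|_\infty$ via \eqref{moment_bound}, and continuity by splitting off a finite head (continuous by hypothesis on the $K_{n,k}$) and controlling the tail uniformly on a compact neighbourhood via \eqref{moment_uniform}. The paper writes this last step as an explicit $\epsilon$--$\delta$ argument at each point rather than invoking the uniform-limit theorem on compacts, but the content is identical.
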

\begin{proof}
	For every $t\in I$, by the boundedness of $f$ and by \eqref{moment_bound} we have the following inequality 
	$$
	|(S_nf)(t)|\leq \sum_{k\in J} |f(v_{n,k})||K_{n,k}(t)|\leq \|f\|_\infty m_0(n)<\infty. 
	$$
	This proves that $S_nf$ is well-defined and bounded. \\
	Now, let $t\in I$ and $\epsilon>0$. Given $\rho>0$, by \eqref{moment_uniform} there exists $\overline{k}>0$ such that $\sum_{|k|>\overline{k}} |K_{n,k}(\tilde t)|<\epsilon$ for every $\tilde t\in I,|t-\tilde t|\leq \rho$. Moreover, by the continuity of $K_{n,k}$, for each $|k|\leq \overline{k}$ there exists $\delta_k>0$ such that $|K_{n,k}(t)-K_{n,k}(s)|<\frac{\epsilon}{2\overline{k}+1}$ for every $|t-s|<\delta_k$. Therefore, putting $\displaystyle \delta=\min(\{\rho\}\cup\{\delta_k:{|k|\leq \overline{k}}\})$, we have
	\begin{align*}
		|(S_nf)(t)-(S_nf)(s)|&\leq \left |\sum_{k\in J} f(\nu_{n,k}) (K_{n,k}(t)- K_{n,k}(s)) \right |\\
		&\leq \|f\|_\infty \sum_{k\in J} \left | K_{n,k}(t)- K_{n,k}(s)\right |\\
		&\leq \|f\|_\infty \left ( \sum_{|k|\leq \overline{k}} \left | K_{n,k}(t)- K_{n,k}(s)\right | +\sum_{|k|> \overline{k}} \left | K_{n,k}(t)- K_{n,k}(s)\right | \right )\\
		&\leq \|f\|_\infty (\epsilon +2\epsilon)=3\|f\|_\infty \epsilon
	\end{align*}
	for every $s\in I,|t-s|<\delta$. Thus, $S_nf$ is continuous. 
\end{proof}

The theorem below states that $S_n$ are approximation operators for scalar functions when $n$ goes to infinity (the proof follows standard steps, see for instance \cite[Theorem 1]{BM2}). We will next use it for formulating the result for curves. 

\begin{thm}
	\label{thm_approx}
	Let $f:I\to \R$ be a bounded function. Then $$\dsp \lim_{n\to +\infty} (S_n f)(t) = f(t),$$ for every point $t$ of continuity of $f$. Moreover, if $f$ is also uniformly continuous and \eqref{limit_moment} is satisfied uniformly with respect to $t\in I$, then
	$$\dsp \lim_{n\to +\infty} \|S_n f - f\|_\infty=0.$$
\end{thm}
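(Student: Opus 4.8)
The plan is to prove the pointwise statement first and then promote it to the uniform one by tracking where continuity is used. The starting point is the partition of unity \eqref{partition}, which lets me write the error as
\begin{align*}
(S_nf)(t)-f(t)=\sum_{k\in J}\bigl(f(\nu_{n,k})-f(t)\bigr)K_{n,k}(t),
\end{align*}
since $f(t)=f(t)\sum_{k\in J}K_{n,k}(t)$. Taking absolute values and using the triangle inequality gives the bound
\begin{align*}
|(S_nf)(t)-f(t)|\leq\sum_{k\in J}|f(\nu_{n,k})-f(t)|\,|K_{n,k}(t)|.
\end{align*}
The standard idea is now to split this sum at a threshold $\delta>0$ into the \emph{near} terms, where $|\nu_{n,k}-t|<\delta$, and the \emph{far} terms, where $|\nu_{n,k}-t|\geq\delta$.

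First I would handle the near terms. Fix $\epsilon>0$; by continuity of $f$ at $t$ there is a $\delta>0$ such that $|f(\nu_{n,k})-f(t)|<\epsilon$ whenever $|\nu_{n,k}-t|<\delta$. Hence the near part is bounded by $\epsilon\sum_{k\in J}|K_{n,k}(t)|\leq\epsilon\,m_0(n)<\epsilon M_0$, using \eqref{moment_bound}. This is uniformly small in $n$. Next I would handle the far terms: here I cannot control $|f(\nu_{n,k})-f(t)|$ by smallness, so instead I bound it crudely by $2\|f\|_\infty$ (using boundedness of $f$), obtaining for the far part the bound $2\|f\|_\infty\sum_{k\in J,\,|\nu_{n,k}-t|\geq\delta}|K_{n,k}(t)|$. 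By assumption \eqref{limit_moment} this tail tends to $0$ as $n\to+\infty$, so for $n$ large it is below $\epsilon$. Combining the two pieces gives $|(S_nf)(t)-f(t)|<(M_0+1)\epsilon$ for all large $n$, which proves the pointwise limit since $\epsilon$ was arbitrary.

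For the uniform statement I would rerun the same decomposition but keep everything uniform in $t$. Uniform continuity of $f$ means the $\delta$ above can be chosen independently of $t$, so the near part is bounded by $\epsilon M_0$ uniformly. The far part is controlled by the hypothesis that \eqref{limit_moment} holds uniformly in $t\in I$, which makes $\sup_{t\in I}\sum_{k\in J,\,|\nu_{n,k}-t|\geq\delta}|K_{n,k}(t)|\to0$; taking the supremum over $t$ of the whole estimate then yields $\|S_nf-f\|_\infty<(M_0+1)\epsilon$ for all large $n$.

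The only genuinely delicate point is the bookkeeping of quantifiers in the far-term estimate: one must fix $\delta$ from the (uniform) continuity modulus \emph{before} invoking \eqref{limit_moment}, since the latter gives decay of the tail for each fixed $\delta$ (or uniformly in $t$ for each fixed $\delta$), and one needs the orders of choosing $\delta$, then $n$, to be compatible. Everything else is a routine $\epsilon$-splitting argument; the hypotheses \eqref{partition}, \eqref{moment_bound}, and \eqref{limit_moment} have been tailored to make exactly this two-part bound close.
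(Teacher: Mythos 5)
Your proof is correct and follows essentially the same route as the paper: the partition-of-unity identity, the near/far split at a fixed $\delta$ chosen from (uniform) continuity, the bound $\epsilon\,m_0(n)\leq\epsilon M_0$ via \eqref{moment_bound} for the near terms, and $2\|f\|_\infty$ times the tail of \eqref{limit_moment} for the far terms. The only cosmetic difference is the final constant ($(M_0+1)\epsilon$ versus the paper's $(M_0+2\|f\|_\infty)\epsilon$), and your remark on the order of quantifiers matches the paper's handling.
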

\begin{proof}
	The steps to prove the two statements are very similar, so we confine to the proof of the second one. 
	Let $\epsilon>0$. Since $f$ is uniformly continuous there exists $\delta >0$ such that $|f(\nu_{n,k})-f(t)|<\epsilon$ for every $n\in \N^+,k\in J$ and $t\in I$ satisfying $|\nu_{n,k}-t|<\delta$. We denote by $J_1:=\{k\in J: |\nu_{n,k}-t|<\delta\}$, by $J_2:=\{k\in J: |\nu_{n,k}-t|\geq \delta\}$ and remark that $J_1$ is finite because of \eqref{v_nk}. 
	Moreover, by \eqref{partition}
	\begin{align*}
		(S_n f)(t) - f(t)=\sum_{k\in J} (f(\nu_{n,k})-f(t)) K_{n,k}(t),
	\end{align*}
	so we can write $|(S_n f)(t) - f(t)|\leq s_1(t)+s_2(t),$ where 
	$$
	s_1(t)=\sum_{k\in J_1} |f(\nu_{n,k})-f(t)| |K_{n,k}(t)|\;
	\text{ and }\;
	s_2(t)=\sum_{k\in J_2} |f(\nu_{n,k})-f(t)| |K_{n,k}(t)|.
	$$
	For hypothesis, \eqref{limit_moment} is satisfied uniformly with respect to $t\in I$. Hence, there exists $\overline{n}$ such that
	$\dsp \sum_{k\in J_2} \left | K_{n,k}(t)\right |<\epsilon$ for $n\geq \overline{n}$ and for any $t\in I$. 
	Thus, by \eqref{moment_bound} we have that for $n\geq \overline{n}$ and for any $t\in I$
	$$
	s_1(t)< \epsilon \sum_{k\in J_1} |K_{n,k}(t)|\leq  \epsilon\, m_0(n)\leq \epsilon\, M_0,
	$$
	and that
	$$
	s_2(t)< 2\|f\|_{\infty} \sum_{k\in J_2} |K_{n,k}(t)| < 2\|f\|_{\infty} \epsilon.
	$$
	Summarizing, $\|S_n f - f\|_{\infty}<(M_0+2\|f\|_\infty)\epsilon$ for $n\geq \overline{n}$ and so the proof is concluded. 
\end{proof}

In this paper, we are interested to curves in $\R^d$ and Proposition \ref{lem_approx} and Theorem \ref{thm_approx} give, as an immediate consequence, the following approximation result. For a bounded function $\gamma: I\to \R^d$, $\gamma(t)=(x_1(t),\dots,x_d(t))$, $t\in I$, we define $S_n \gamma : I\to \R^d$ by 
\begin{equation*}	
	\label{S_n_curve}
	(S_n \gamma)(t):=((S_n x_1)(t),\dots,(S_n x_d)(t)), \qquad t\in I,
\end{equation*}
or, in a more compact form, by
\begin{equation*}
	(S_n \gamma)(t)= \sum_{k\in J} \gamma(\nu_{n,k}) K_{n,k}(t), \qquad t\in I.
\end{equation*}

\begin{cor}\label{cor_appr_curves}
	Let $\gamma: I\to \R^d$ be a bounded function. The following statements hold.
	\begin{enumerate}
		\item $S_n \gamma$ is a bounded and continuous function (i.e., a bounded curve);
		\item $\dsp \lim_{n\to +\infty} (S_n \gamma)(t) =\gamma(t)$ for every point $t$ of continuity of $\gamma$;
		\item if $\gamma$ is also uniformly continuous and \eqref{limit_moment} is satisfied uniformly with respect to $t\in I$, then $\dsp \lim_{n\to +\infty} \|S_n \gamma - \gamma\|_\infty=0$.
	\end{enumerate}
\end{cor}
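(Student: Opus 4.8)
The plan is to reduce each of the three assertions to its scalar counterpart, exploiting the componentwise definition $(S_n\gamma)(t)=((S_nx_1)(t),\dots,(S_nx_d)(t))$ together with the norm $\|\gamma\|_\infty=\max_{i=1,\dots,d}\|x_i\|_\infty$. The starting observation is that each component $x_i$ inherits boundedness from $\gamma$, since $\|x_i\|_\infty\leq\|\gamma\|_\infty<\infty$. Consequently Proposition \ref{lem_approx} and Theorem \ref{thm_approx} apply to every $x_i$ separately, and the whole task becomes one of assembling the $d$ scalar conclusions back into a statement about the vector-valued function.

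For statement (i), I would invoke Proposition \ref{lem_approx} on each $x_i$ to obtain that $S_nx_i$ is well-defined, bounded and continuous. Since a map $I\to\R^d$ is continuous exactly when each of its components is continuous, and bounded exactly when each component is bounded, it follows immediately that $S_n\gamma=(S_nx_1,\dots,S_nx_d)$ is a bounded continuous function, that is, a bounded curve. For statement (ii), at a continuity point $t$ of $\gamma$ every component $x_i$ is continuous at $t$, so Theorem \ref{thm_approx} gives $(S_nx_i)(t)\to x_i(t)$ as $n\to+\infty$ for each $i$; as convergence in $\R^d$ is equivalent to coordinatewise convergence, $(S_n\gamma)(t)\to\gamma(t)$ follows at once.

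For statement (iii), the only extra point to verify is that uniform continuity of $\gamma$ transfers to each $x_i$: using the maximum norm on $\R^d$ one has $|x_i(t)-x_i(s)|\leq\max_{j}|x_j(t)-x_j(s)|$, so a single modulus of continuity for $\gamma$ serves all $d$ components simultaneously. Theorem \ref{thm_approx} then yields $\|S_nx_i-x_i\|_\infty\to0$ for each $i$, and since $\|S_n\gamma-\gamma\|_\infty=\max_{i=1,\dots,d}\|S_nx_i-x_i\|_\infty$ is the maximum of finitely many null sequences, it too tends to zero.

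I expect no genuine obstacle here: the entire content is already contained in the scalar results, and the remaining work is purely the bookkeeping of passing between a vector-valued function and its finitely many coordinates. The one place that demands a moment's care is statement (iii), where one must confirm that both the uniform modulus of continuity and the uniform version of \eqref{limit_moment} are common to all components, so that taking the maximum over the $d$ indices preserves the uniform convergence rather than merely the pointwise one.
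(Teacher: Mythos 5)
Your proposal is correct and coincides with the paper's intent: the paper gives no separate proof, presenting the corollary as an immediate consequence of Proposition \ref{lem_approx} and Theorem \ref{thm_approx} applied componentwise, which is exactly the reduction you carry out. Your explicit checks (coordinatewise continuity and convergence in $\R^d$, the transfer of the uniform modulus of continuity to each component, and the identity $\|S_n\gamma-\gamma\|_\infty=\max_{i}\|S_nx_i-x_i\|_\infty$) simply spell out the bookkeeping the authors leave implicit.
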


\section{Special cases of operators}
\label{sec_spec}

As mentioned in the introduction, special cases of our operators setting are the generalized sampling operators, the  Sz\'{a}sz-Mirak'jan operator, the Baskakov operator and the Bernstein operator.  We give the details for each of them below.

\subsection{Generalized sampling operator}

We start recalling that, following \cite{RS}, a continuous function $\chi:\R\to \R$ is called a {\it kernel} if the following conditions are satisfied:
\begin{enumerate}
	\item[(i)] for every $t\in \R$, 
	\begin{equation*}
		\sum_{k\in \Z}\chi(t-k)=1;
	\end{equation*}
	\item[(ii)] 
\begin{equation*}
	\sum_{k\in \Z} |\chi(t-k)|   \text{ converges uniformly for }t\in [0,1]. 
\end{equation*}
\end{enumerate}

With equation \eqref{def_oper} and the choice $I=\R$, $J=\Z$, $\nu_{n,k}=\frac kn$ and  $K_{n,k}(t)=\chi(nt-k)$, where $\chi$ is a kernel, we recover the {\it generalized sampling operator}
$$
(\mathcal{S}^\chi_{n} f)(t) = \sum_{k\in \Z} f\! \left (\frac kn \right)\chi(nt-k), \qquad f:\R\to \R,\; t\in \R. 
$$
This operator has been studied in \cite{Sp,RS,BSS} and under some variations in \cite{ACLR,ACCSV,BBSV,BCV,CCCGV,CCNV,CG,CDV,CPV,TVSS,VZ}.
Assumption \eqref{partition} follows by property (i), assumptions \eqref{moment_uniform} and \eqref{moment_bound} are given by Lemma 1(b) of \cite{RS}, finally \eqref{limit_moment} holds (also uniformly with respect to $t\in \R$) by Lemma 1(c) of \cite{RS}. 

Classical examples of kernels are the Fejér kernel and the B-splines \cite{RS,BBSV}. 
The {\it Fejér kernel}, shown in Figure \ref{Fejer}, is $$F(t)=\frac 1 2 \sinc^2 \!\left (\frac t2 \right ),$$
where $\sinc$ is the function defined by
$\dsp 
\sinc(t)=\begin{cases}
	\frac{\sin(\pi t)}{\pi t }  & t\neq 0 \\
	1 & t=0.
\end{cases} 
$

\noindent 
The {\it B-spline of order $m$} is the function defined by $$M_1(t)=\begin{cases}
	1 & t\in \left [-\frac 12, \frac 12\right ] \\
	0 & \text{otherwise},
\end{cases} $$
and
$$M_m(t)=\frac{1}{(m-1)!} \sum_{j=0}^m (-1)^j \binom{m}{j} \max \left(\frac m2 +t-j,0\right )^{m-1}, \qquad m\geq 2,$$
or, equivalently, by the recursive formula
$$
M_m=M_{m-1}\ast M_1, \qquad m\geq 2,
$$
where $\ast$ is the convolution product. In contrast to the Fejér kernel, the B-spline of order $m$ has compact support (namely $[-\frac m2 ,\frac m2]$). 
The graphs of some B-splines are displayed in Figure \ref{Bsplines}.

\begin{figure}[h]
	\begin{center}
		\begin{subfigure}[b]{0.45\textwidth}
			\includegraphics[scale=0.45]{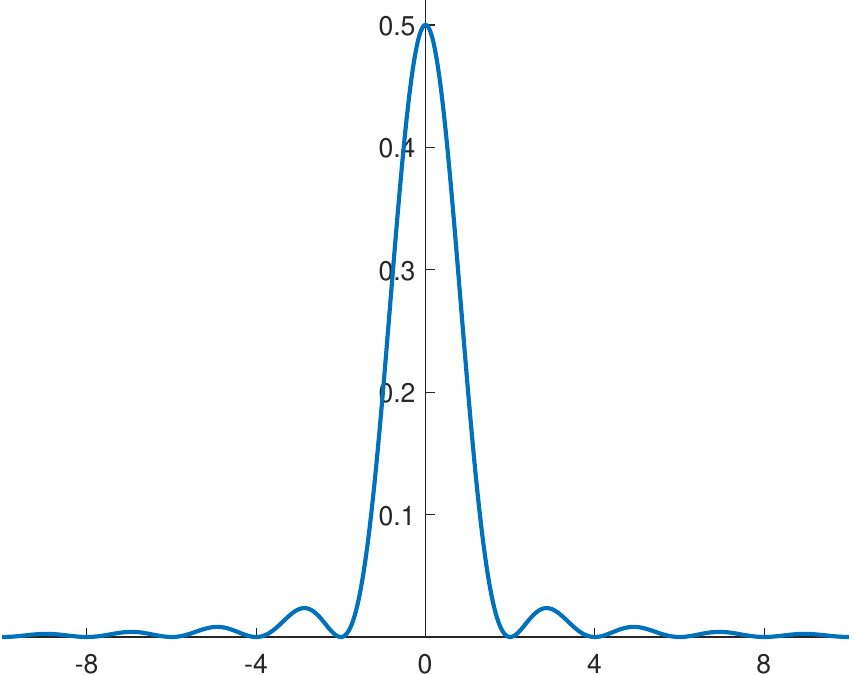}
			\caption{Fejér kernel.\\{\color{white}.}}  
			\label{Fejer}
		\end{subfigure}
		\hspace*{1cm}
		\begin{subfigure}[b]{0.45\textwidth}
			\includegraphics[scale=0.45]{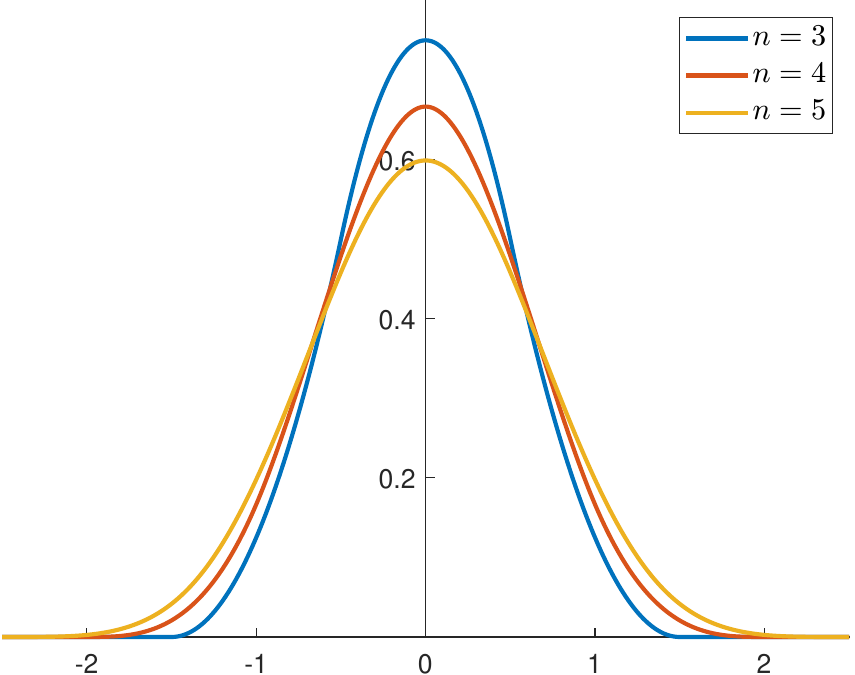}
			\caption{B-splines of order $3$, $4$ and $5$ (in blue, red and orange, respectively).}  
			\label{Bsplines}
		\end{subfigure}
	\end{center}
\end{figure}

\subsection{Sz\'{a}sz-Mirak'jan operator}

From \eqref{def_oper} and with $I=[0,+\infty[$, $J=\N$, $\nu_{n,k}=\frac kn$ and  $K_{n,k}(t)=e^{-nt}\frac{(nt)^k}{k!}$ we get the
{\it Sz\'{a}sz-Mirak'jan operator}
$$
(\mathsf{S}_{n} f)(t) = \sum_{k=0}^{+\infty} f\! \left (\frac kn \right)e^{-nt}\frac{(nt)^k}{k!}, \qquad f:[0,+\infty[\;\to \R, \; t\in [0,+\infty[.
$$

Sz\'{a}sz-Mirak'jan operator has been studied in \cite{Mirakjan,Szasz} and, with some modifications, also in \cite{Acar,ACL,AGSK}. 

The continuous functions $K_{n,k}$ satisfy the assumptions. Indeed, \eqref{partition} follows by the power series of the exponential function and trivially implies  \eqref{moment_bound} since $K_{n,k}>0$ for every $n\in \N^+$ and $k\in \N$. 
Moreover, let $n\in \N^+$, $U$ a compact of $[0,+\infty[$ and $\epsilon>0$. There exists $0\leq a < b$ such that $U\subset [a,b]$ and, since $K_{n,k}$ is strictly increasing on $[0,\frac kn]$, 
there exists $\tilde k_1$ such that 
$$\sup_{t\in U} K_{n,k}(t) \leq \sup_{t\in [a,b]} K_{n,k}(t)=K_{n,k}(b)=e^{-nb}\frac{(nb)^k}{k!}, \qquad \text{for all }k>\tilde k_1.$$
Furthermore, there exists $\tilde k_2$ such that  $\dsp \sum_{k>\tilde k_2} e^{-nb}\frac{(nb)^k}{k!}<\epsilon$. 
Therefore, if $\tilde k=\max(\tilde k_1,\tilde k_2)$, 
$$
\sum_{k>\tilde k} K_{n,k}(t)\leq \sum_{k>\tilde k} e^{-nb}\frac{(nb)^k}{k!}<\epsilon, \quad \text{for all }t\in U.
$$
In conclusion, also \eqref{moment_uniform} holds true. Finally, \eqref{limit_moment} is satisfied because the proof of \cite[Corollary 2]{BM} shows that for $\delta >0$
$$\dsp \sum_{k\in \N,|\nu_{n,k}-t|\geq \delta}  K_{n,k}(t) =o(n^{-1}) \text{ for } n\to +\infty.$$

\subsection{Baskakov operator}

The choice $I=[0,+\infty[$, $J=\N$, $\nu_{n,k}=\frac kn$ and  $K_{n,k}(t)=\binom{n+k-1}{k}\frac{t^k}{(1+t)^{n+k}}$ leads to the {\it Baskakov operator} (introduced in \cite{Baskakov}, see also \cite{AMM,AGSK,GAS})
$$
(\mathsf{B}_{n} f)(t) = \sum_{k=0}^{+\infty} f \! \left (\frac kn \right)\!\binom{n+k-1}{k}\frac{t^k}{(1+t)^{n+k}}, \qquad f:[0,+\infty[\;\to \R,\; t\in [0,+\infty[.
$$

Concerning the assumptions about the continuous functions $K_{n,k}$, \eqref{partition} holds as consequence of the identity $\dsp \frac{1}{1-q}=\sum_{k=0}^{+\infty} q^k$, with $q=\frac{t}{1+t}$. Then, \eqref{moment_bound} is verified since $K_{n,k}>0$ for every $n\in \N^+$ and $k\in \N$. To prove \eqref{moment_uniform} one can follows steps similar to those made for Sz\'{a}sz-Mirak'jan operators. Finally, in \cite[Section 5.2]{BM} it is proved that for every $\delta>0$ and $t\in I$ 
$$\sum_{k\in J,|\nu_{n,k}-t|\geq \delta}  K_{n,k}(t) (\nu_{n,k}-t)^2=o(n^{-2}) \quad\text{as $n\to +\infty$.}$$ 
Thus, since $\delta^2 K_{n,k}(t) \leq K_{n,k}(t) (\nu_{n,k}-t)^2$ if $|\nu_{n,k}-t|\geq \delta$, the remaining assumption \eqref{limit_moment} holds too.

\subsection{Bernstein operator}

If $I=[0,1]$, $J=\{0,1,\dots,n\}$, $\nu_{n,k}=\frac kn$ and  $K_{n,k}(t)=\binom{n}{k}t^k(1-t)^{n-k}$ we recover the {\it Bernstein operator} \cite{L} (see also \cite{AMCY,BMMA,CW,MAK} for related works)
$$
(\mathcal{B}_{n} f)(t) = \sum_{k=0}^n f \! \left (\frac kn \right)\!\binom{n}{k}t^k(1-t)^{n-k}, \qquad f:[0,1]\to \R,\; t\in [0,1].
$$
The assumption \eqref{partition} can be easily checked and  \eqref{moment_uniform}-\eqref{limit_moment} are trivially satisfied since $J$ is finite and $I$ is bounded. 
We also remark that if $\gamma:[0,1]\to \R^d$ is a curve, then $\mathcal{B}_{n} \gamma$ is a so-called Bézier curve \cite{Farin} with control points $\{ \gamma \left (\frac kn \right)\}_{k\in J}$. \\

Figure \ref{fig_elem_funct} shows the graphs of some elementary functions appearing in the formulations of the generalized sampling, Sz\'{a}sz-Mirak'jan, Baskakov and Bernstein operators.

\begin{figure}[h!]
	\begin{center}
		\begin{subfigure}[b]{0.45\textwidth}
			\includegraphics[scale=0.45]{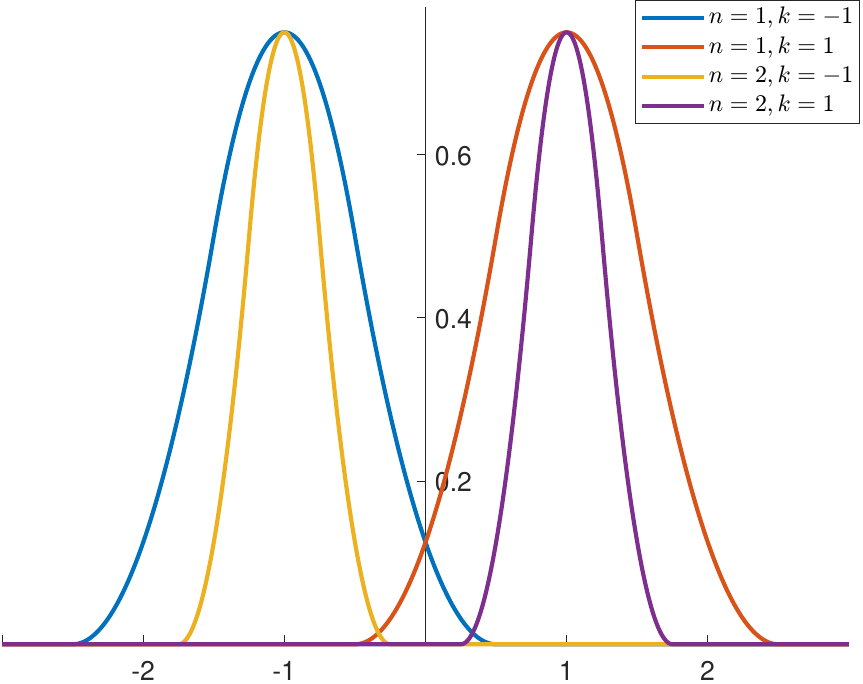}
			\caption{Some functions $t\mapsto M_3(nt-k)$ for different values of $n$ and $k$.\medskip }
		\end{subfigure}
		\hspace*{.5cm}
		\begin{subfigure}[b]{0.45\textwidth}
			\includegraphics[scale=0.45]{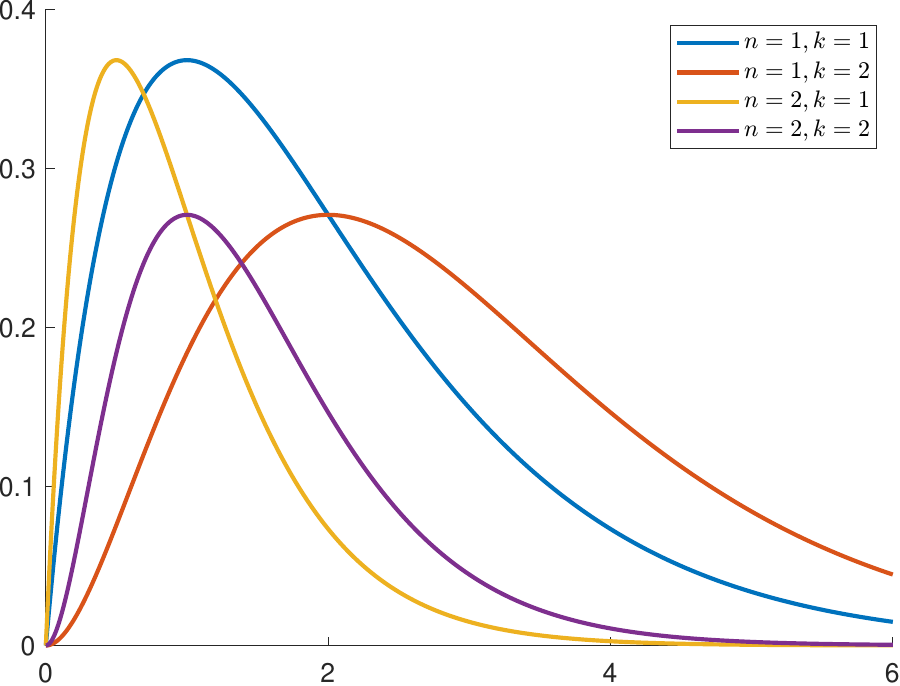}
			\caption{Some functions $t\mapsto e^{-nt}\frac{(nt)^k}{k!}$ for different values of $n$ and $k$. \medskip}
		\end{subfigure}
		\begin{subfigure}[b]{0.45\textwidth}
			\includegraphics[scale=0.45]{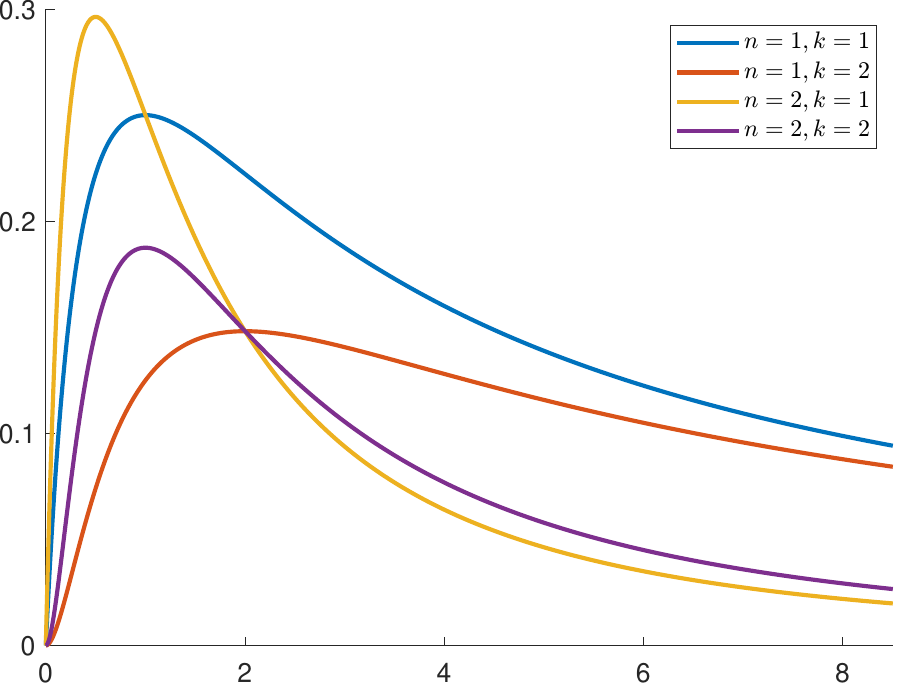}
			\caption{Some functions $t\mapsto \binom{n+k-1}{k}\frac{t^k}{(1+t)^{n+k}}$ for different values of $n$ and $k$. \smallskip}
		\end{subfigure}
		\hspace*{.5cm}
		\begin{subfigure}[b]{0.45\textwidth}
			\includegraphics[scale=0.45]{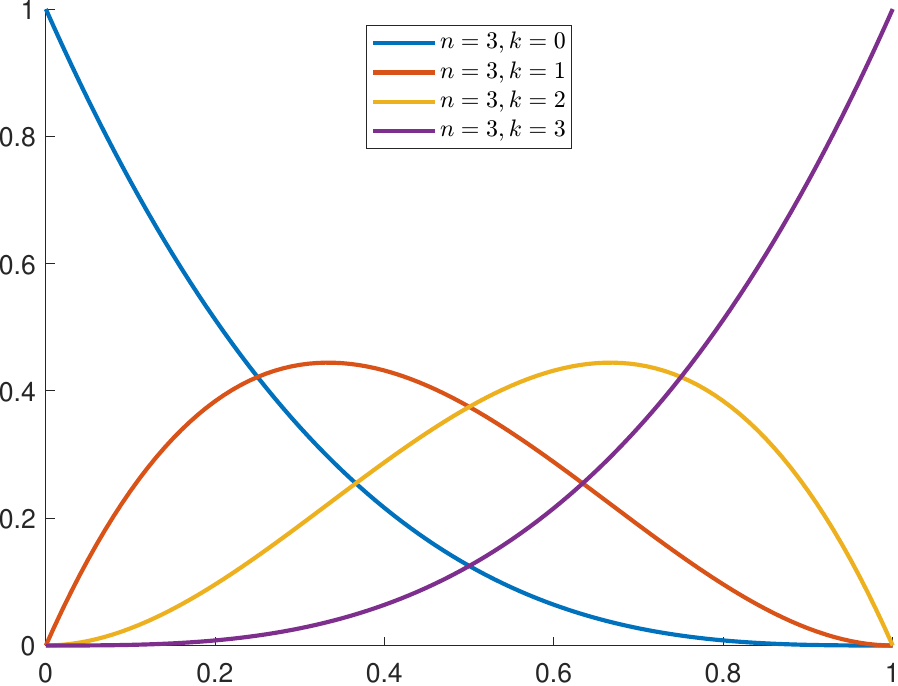}
			\caption{Some functions $t\mapsto \binom{n}{k}t^k(1-t)^{n-k}$ for different values of $n$ and $k$.\smallskip }
		\end{subfigure}
	\end{center}
	\caption{Each sub-figure shows some elementary functions $K_{n,k}$ appearing in the expression of (a) generalized sampling operator with B-spline kernel of order $3$; (b) Sz\'{a}sz-Mirak'jan operator; (c) Baskakov operator; (d) Bernstein operator.}
	\label{fig_elem_funct}
\end{figure}

\section{Examples}
\label{sec_exm}

In this section we illustrate some graphical examples of the approximation of curves, as stated by Corollary \ref{cor_appr_curves}, via the operators of type \eqref{def_oper}. We first remark that a curve with bounded domain can be approximated by the operator $S_n$ defined by \eqref{def_oper} even if the domain of the curve is not the interval $I$. To explain better this idea with some examples, let us assume that $\gamma:[a,b]\to \R^d$ is a bounded curve where $[a,b]$ is a bounded interval ($a<b$). We have to analyze different cases depending on the type of interval $I$.  \\

If $I$ is a bounded closed interval, then we can consider the new curve $\tilde \gamma:I\to \R^d$, $\tilde \gamma=\gamma \circ \sigma$, where $\sigma:I\to [a,b]$ is a bijective function (for instance, an affine transformation between the domains). Since $\tilde \gamma$ has domain $I$, we can approximate it by a curve $S_n\tilde \gamma $. Finally, to come back to the domain $[a,b]$, we take into account the inverse transformation $\sigma^{-1}$. In other words, we consider the curves $S_n\tilde \gamma\circ \sigma^{-1}=S_n (\gamma \circ \sigma)\circ \sigma^{-1}$ in order to approximate $\gamma$. The result follows by the fact that for every $t\in [a,b]$
$$\dsp \lim_{n\to +\infty} (S_n (\gamma \circ \sigma)\circ \sigma^{-1})(t)=\lim_{n\to +\infty} (S_n (\gamma \circ \sigma))(\sigma^{-1}(t))=(\gamma \circ \sigma)(\sigma^{-1}(t))=\gamma(t).$$

This is exactly the case of Bernstein operators; in fact, $\mathcal{B}_{n}$ is defined for functions with domain $I=[0,1]$, so to approximate a curve $\gamma:[a,b]\to \R^d$ we can take the curve $\mathcal{B}_n (\gamma \circ \sigma)\circ \sigma^{-1}$ where $\sigma:I \to [a,b]$ is given by $\sigma(s)=(1-s)a+sb$.

If, instead, $I=\R$ (in particular, in the case of generalized sampling operators) we do not have necessity of transform $[a,b]$. Indeed, since $[a,b]$ is a subset of $\R$, we can simply extend the curve $\gamma$ to a new curve $\tilde \gamma :\R \to \R^d$ setting 
\begin{equation}\label{estensione}
 \tilde \gamma(t)=\begin{cases}
	\gamma(a) &\quad t<a\\
	\gamma(t) &\quad t\in [a,b]\\
	\gamma(b) &\quad t>b.
\end{cases}
\end{equation}
Thus, to have an approximation of $\gamma$ we can simply take an approximation $S_n\tilde \gamma$ of $\tilde \gamma$ and then a restriction of it in the interval $[a,b]$. Note that we have extended $\gamma$ as in \eqref{estensione} to have a continuous function (so a curve) and then to apply Corollary \ref{cor_appr_curves} for every point of $\R$. \\

Finally, let us consider $I=[0,+\infty[$ as in the case of Sz\'{a}sz-Mirak'jan or Baskakov operators. First of all, $[a,b]$ is not necessarily a subset of $I$, so we can appeal to the translation $\sigma:[0,b-a]\to [a,b]$, $\sigma(s)=s+a$, and  secondly, we need an extension to the interval $[0,+\infty[$. Summarizing, we define the curve
\begin{equation}\label{estensione_dx}
	\tilde \gamma(t)=\begin{cases}
		\gamma \circ \sigma(t)=\gamma(t+a) &\quad t\in [0,b-a]\\
		\gamma(b) &\quad t>b-a,
	\end{cases}
\end{equation}
we approximate it with $S_n \tilde \gamma$, we take the restriction $R{(S_n \tilde \gamma)}$ of $S_n \tilde \gamma$ on the interval $[0,b-a]$ and, coming back to the original domain exploiting $\sigma^{-1}$, we are able to  approximate $\gamma$ with the curve $R{(S_n \tilde \gamma)} \circ \sigma^{-1}$.

\begin{figure}[h]
	\begin{center}
		\begin{subfigure}[b]{0.42\textwidth}
			\includegraphics[scale=0.52]{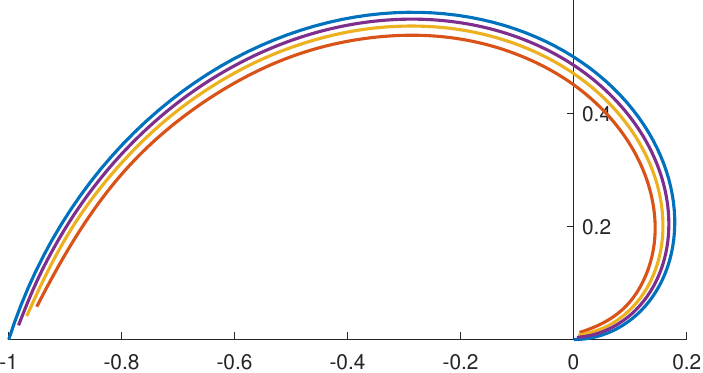}
			\caption{Approximations with the generalized sampling operator and Fejér kernel (red: $n=15$, orange: $n=25$, purple: $n=50$).}
			\label{}
		\end{subfigure}
	\hspace*{0.5cm}
		\begin{subfigure}[b]{0.42\textwidth}
			\includegraphics[scale=0.52]{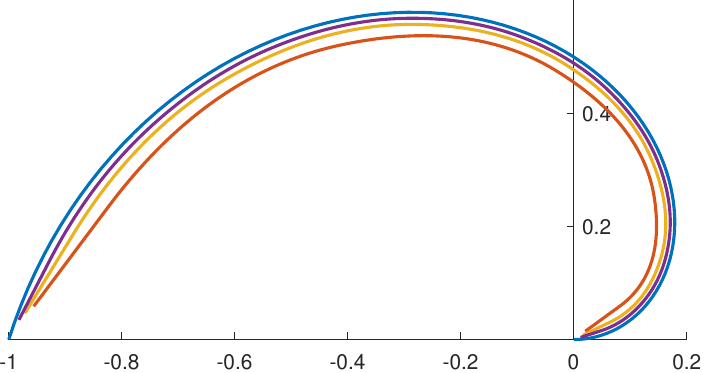}
			\caption{Approximations with the generalized sampling operator and B-spline kernel of order $3$ (red: $n=5$, orange: $n=7$, purple: $n=10$).}  
			\label{}
		\end{subfigure}
	\label{}
\end{center}
	\begin{center}
		\begin{subfigure}[b]{0.42\textwidth}
			\includegraphics[scale=0.52]{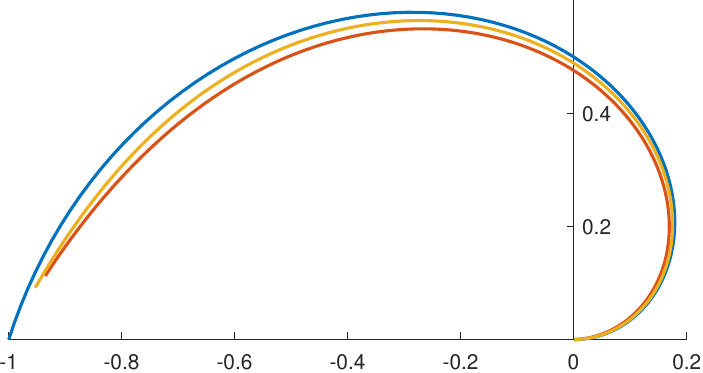}
			\caption{Red: approximation with the Sz\'{a}sz-Mirak'jan operator ($n=90$). Orange: approximation with the Baskakov operator ($n=300$).}
			\label{}
		\end{subfigure}
		\hspace*{0.5cm}
		\begin{subfigure}[b]{0.42\textwidth}
			\includegraphics[scale=0.52]{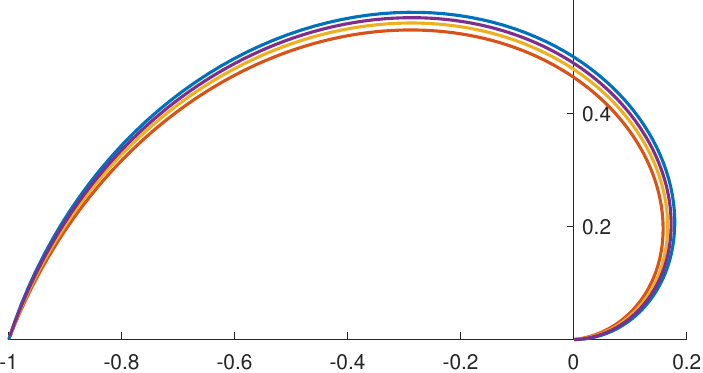}
			\caption{Approximations with the Bernstein operator (red: $n=30$, orange: $n=50$, purple: $n=100$).\\}  
			\label{}
		\end{subfigure}
	\caption{In each sub-figures the open curve in blue is $\gamma:[0,1]\to \R^2$, $\gamma(t)=(t\cos(\pi t),t\sin(\pi t))$. The curves in red, orange and purple are some approximations given by different operators.}
		\label{appr_open}
	\end{center}
\end{figure}

As example, in Figure \ref{appr_open} we consider a curve of the real plane (in blue) and its approximations by the operators of Section \ref{sec_spec} for different values of the parameter $n$. In particular, we apply the method explained above in the cases of generalized sampling, Sz\'{a}sz-Mirak'jan and Baskakov operators. In Figure \ref{appr_3d} we show a curve in the tridimensional space and an approximation by the generalized sampling operator (the kernel chosen is the B-spline of order $3$ and $n$ is equal to $10$). \\

\begin{figure}[h]
	\begin{center}
		\includegraphics[scale=0.6]{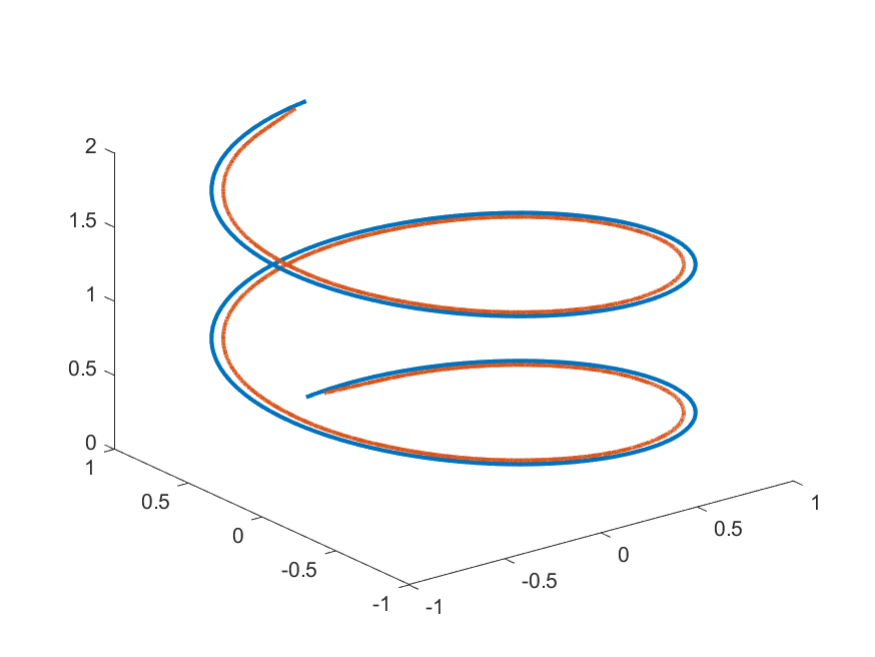}
		\caption{The curve in blue is $\gamma:[0,2]\to \R^3$, $\gamma(t)=(\cos(2\pi t),\sin(2\pi t),t)$. The curve in red is the approximation given by the generalized sampling operator with B-spline kernel of order $3$ ($n=10$).}
		\label{appr_3d}
	\end{center}
\end{figure}

If $\gamma:[0,1]\to \R^d$ is, in particular, a closed curve (i.e. $\gamma(0)=\gamma(1)$ is satisfied), we would like approximations of $\gamma$ which are closed too. For this end, we extend $\gamma$ not as in \eqref{estensione}, but periodically, i.e. we define
\begin{equation}\label{estensione_period}
	\tilde \gamma(t)=\gamma(t-m),\, \text{ for every }t\in \R,m\in \Z \text{ such that }t-m\in [0,1].
\end{equation}
Hence, the approximation $\mathcal{S}^\chi_{n} \tilde \gamma$ with the generalized sampling operator is a closed curve as a consequence of the following result.

\begin{pro}
	\label{pro_period}
	Let $f:\R\to \R$ be a bounded and periodic function with period $1$. Then, for every kernel $\chi$ and every $n\in \N^+$, the function $\mathcal{S}^\chi_{n}f$ is periodic with period $1$.
\end{pro}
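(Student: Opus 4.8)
The plan is to prove the identity $(\mathcal{S}^\chi_{n}f)(t+1)=(\mathcal{S}^\chi_{n}f)(t)$ directly for every $t\in\R$ by an index shift in the defining series. Writing out the definition, one has $(\mathcal{S}^\chi_{n}f)(t+1)=\sum_{k\in\Z}f(k/n)\,\chi(n(t+1)-k)=\sum_{k\in\Z}f(k/n)\,\chi(nt+n-k)$. The crucial observation is that translating $t$ by $1$ translates the argument of $\chi$ by exactly $n$, which is an integer; hence this extra shift can be reabsorbed into the summation index, since the index set $J=\Z$ is invariant under integer translations. This is really the statement that the sampling operator commutes with translations by $1$ relative to the lattice $\{k/n\}$.

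First I would perform the substitution $j=k-n$ (equivalently $k=j+n$), which is a bijection of $\Z$ onto itself. Under this change $nt+n-k=nt-j$, while the sampled value becomes $f(k/n)=f((j+n)/n)=f(j/n+1)$. Here the periodicity of $f$ enters: since $f$ has period $1$, one has $f(j/n+1)=f(j/n)$. Substituting back, the series becomes $\sum_{j\in\Z}f(j/n)\,\chi(nt-j)$, which is precisely $(\mathcal{S}^\chi_{n}f)(t)$, yielding the claimed periodicity.

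The one point requiring care — and the only genuine obstacle — is the legitimacy of re-indexing an infinite series over $\Z$. A rearrangement of this kind is valid provided the series converges absolutely. This holds here: $f$ is bounded, and since $\chi$ is a kernel, assumption \eqref{moment_bound} (equivalently, condition (ii) in the definition of a kernel together with Lemma 1(b) of \cite{RS}) guarantees that $\sum_{k\in\Z}|\chi(nt-k)|$ is finite for each $t$. Consequently $\sum_{k\in\Z}|f(k/n)|\,|\chi(nt-k)|\le\|f\|_\infty\sum_{k\in\Z}|\chi(nt-k)|<\infty$, so the series defining $\mathcal{S}^\chi_{n}f$ converges absolutely and the shift $j=k-n$ leaves its sum unchanged. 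This closes the argument.
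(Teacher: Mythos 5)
Your proof is correct and follows essentially the same route as the paper's: an integer index shift ($j=k-n$ versus the paper's $k'=k+nm$) combined with the periodicity of $f$ to absorb the shift, the only cosmetic difference being that you verify the identity at $t+1$ while the paper does it at $t-m$ for arbitrary $m\in\Z$. Your added remark justifying the re-indexing via absolute convergence is a sound piece of care that the paper leaves implicit.
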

\begin{proof}
	Let $\chi$ be a kernel and $n\in \N^+$. For every $t\in \R$ and $m\in \Z$ we have
	\begin{align*}	
		(\mathcal{S}^\chi_{n} f)(t-m) &= \sum_{k\in \Z} f\! \left (\frac kn \right)\chi(n(t-m)-k)\\
		&= \sum_{k\in \Z} f\! \left (\frac kn \right)\chi(nt-nm-k)\\
		&= \sum_{k\in \Z} f\! \left (\frac{k+nm}{n} \right)\chi(nt-nm-k)
	\end{align*}
where in the last equality we used the periodicity of $f$. Thus, with a change $k'=k+nm$, we can write
$$
	(\mathcal{S}^\chi_{n} f)(t-m) = \sum_{k'\in \Z} f\! \left (\frac{k'}{n} \right)\chi(nt-k')= (\mathcal{S}^\chi_{n} f)(t),
$$
i.e. $\mathcal{S}^\chi_{n}f$ is periodic with period $1$.
\end{proof}

\newpage
We remark that if $\gamma:[0,1]\to \R^d$ is a closed curve, then also the approximations $\mathcal{B}_{n} \gamma$ by the Bernstein operator are closed curve, because $(\mathcal{B}_{n}\gamma)(0)=\gamma(0)=\gamma(1)=(\mathcal{B}_{n}\gamma)(1)$. 
In Figure \ref{appr_closed} a closed curve of the real plane and some approximations by the generalized sampling operator (making use of Proposition \ref{pro_period}) and the Bernstein operator are presented. 

\begin{figure}[h!]
	\begin{center}
		\begin{subfigure}[b]{0.44\textwidth}
			\begin{center}
				\includegraphics[scale=0.55]{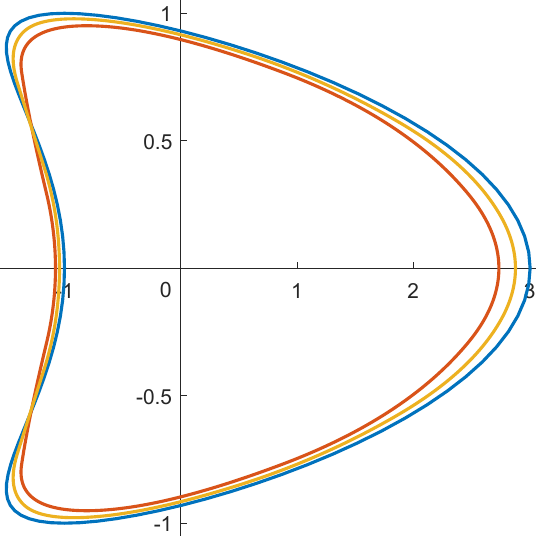}
			\end{center}
			\caption{Approximations with the generalized sampling operator and B-spline kernel of order $3$ (red: $n=10$, orange: $n=15$).
			}  
			\label{}
		\end{subfigure}
		\label{}
		\hspace*{0.2cm}
		\begin{subfigure}[b]{0.44\textwidth}
			\begin{center}
				\includegraphics[scale=0.55]{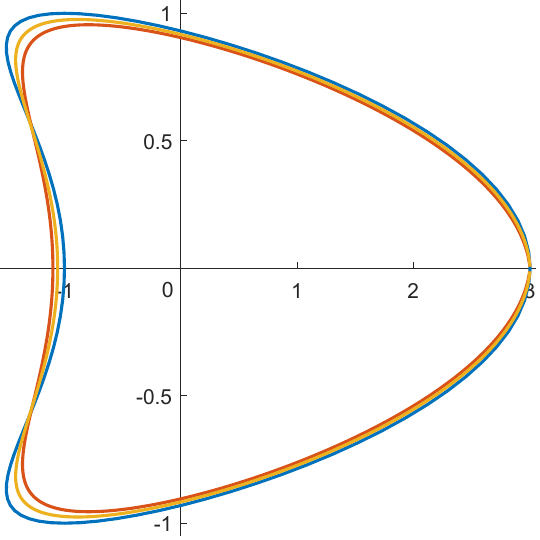}
			\end{center}
			\caption{Approximations with the Bernstein operator (red: $n=80$, orange: $n=150$).
				\\}  
			\label{}
		\end{subfigure}
	\end{center}
	\caption{In each sub-figures the closed curve in blue is $\gamma:[0,1]\to \R^2$ defined by $\gamma(t)=(\cos(4\pi t)+2\cos(2\pi t),\sin(2\pi t))$. The curves in red, orange and purple are some approximations given by different operators.}
	\label{appr_closed}
\end{figure}

\section{Applications}
\label{sec_appl}

The approximation of curves by discrete operators \eqref{def_oper} lends itself to some applications about computer graphics and image processing. In these fields several types of curves have found utility for problems of approximations, for instance trigonometric polynomials \cite{KG}, Bezier curves, splines and rational Bezier curves \cite{Farin}, NURBS curves \cite{NURBS}. Coming back to the general operators \eqref{def_oper}, we want to discuss some direct consequences of Corollary \ref{cor_appr_curves}. 

To begin with, we recall that a gray-scale image is, under the mathematical point of view, a matrix containing in each entry ({\it pixel}) the corresponding level gray. Curves in images (like, for instance, object contours or sharp transitions of gray levels) can be considered as discretization\footnote{For discretization we mean the process that round off the coordinates values towards the nearest integer.} of curves in the real plane. In particular, if we are interested only to a curve itself, then we can represents it in a {\it binary} image, i.e. a matrix with values $0$ (for the background) and $1$ (for the curve). In this paper, we consider only image curves which are discretization of curves which does not intersect themselves. Figure \ref{contorno_image_base} shows a binary image with value $0$ in the entries represented by a white square and value $1$ in the entries represented by a colored square. The pixels with value equals to $1$ constitute a closed curve. 

\begin{figure}[h]
	\begin{center}
		\includegraphics[scale=0.4]{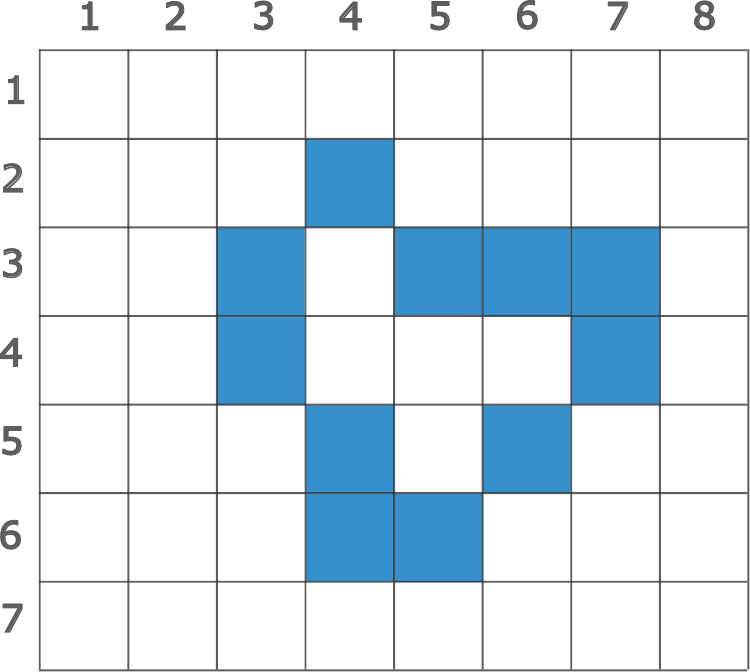}
		\caption{An example of a curve in an image.} 
	\label{contorno_image_base}
\end{center}
\end{figure}

\vspace*{-0.4cm}

\subsection{Approximations of curves in images}

The approximation method of curves in images by means of an operator $S_n$ of type \eqref{def_oper} consists in the extraction of the coordinates of the curve points, from which two continuous functions $x_1:I\to \R$ and $x_2:I\to \R$ can be defined, and then in the approximation of $x_1$ and $x_2$ in terms of $S_n$. By Corollary \ref{cor_appr_curves}, the curve $S_n\gamma=(S_n x_1,S_n x_2)$ is, for $n$ large enough, an approximation of the curve $\gamma=(x_1,x_2)$ and, as consequence, it determines an approximation of the image curve via discretization.  

With the help of Figure \ref{contorno_image} we give more details, confining to the case of closed curves (the other case, open curves, can be treated with little and intuitive changes). 
First of all, Figure \ref{directions} shows a pixel, the eight directions to the nearest neighborhoods and, following \cite{Freeman} (see also \cite{KG}), the ordering from $0$ to $7$ of them. Figure \ref{ex_percorso} contains the same curve of Figure \ref{contorno_image_base} (the square colored in red is the starting and final point of the curve which is, by our convention, the point of the contour with maximum ordinate and minimum abscissa). Based on the ordering of Figure \ref{directions}, a path that runs through the curve from and to the initial point can be created as in Figure \ref{ex_percorso} (among the possible directions, the chosen one is represented by the lowest number).
 
 \begin{figure}[h]
 	\begin{center}
 		\hspace*{0.2cm}
 		\begin{subfigure}[b]{0.3\textwidth}
 			\includegraphics[scale=0.55]{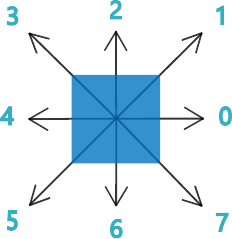} \vspace*{1.4cm}
 			\caption{\hspace*{2.1cm}}  
 			\label{directions}
 		\end{subfigure}
 		\begin{subfigure}[b]{0.4\textwidth}
 			\hspace*{0.1cm}
 			\includegraphics[scale=0.4]{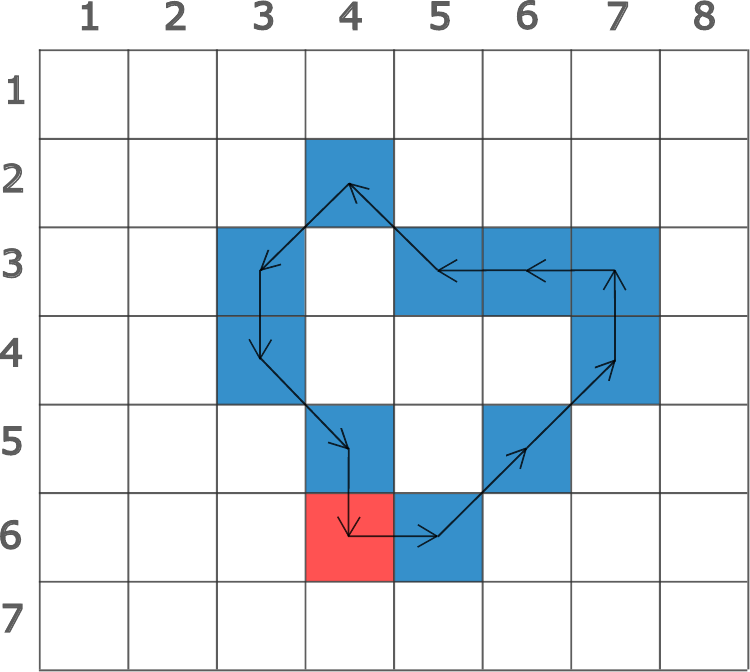}
 			\vspace*{0.05cm}
 			\caption{}
 			\label{ex_percorso}
 		\end{subfigure}
 		\caption{(a) The eight directions from a pixel to the nearest neighborhoods and the corresponding ordering as defined in \cite{Freeman}. (b) An image representing a closed curve with starting and final point in red. The path indicated by the arrows in the second figure is obtained starting from the red pixel and moving to a nearest neighborhood following the ordering of the directions.} 
 	\label{contorno_image}
 \end{center}
\end{figure}

Collecting the labels of the directions as a sequence we obtain the so-called {\it chain code} \cite{KG}. Anyway, for our applications, we are actually interested to the sequences of abscissas and ordinates\footnote{For a point of coordinates $(i,j)$ of an image curve we call $j$ and $i$ the abscissa and the ordinate of the point, respectively. Note also that we enumerate the ordinates from above to bottom (see Figure \ref{ex_percorso}) as it usually done for matrices, i.e. for the representation of images.}. For example, the curve represented in  Figure \ref{ex_percorso}, and with starting point in red, has chain code $c=(0,1,1,2,4,4,3,5,6,7,6)$, sequence of abscissas $u=(4,5,6,7,7,6,5,4,3,3,4)$ and sequence of ordinates $v=(6,6,5,4,3,3,3,2,3,4,5)$.

Once the sequences of abscissas $u=(u_j)_{j=1}^N$ and of ordinates $v=(v_j)_{j=1}^N$ are determined, we define a piece-wise linear function $x_1:[0,1]\to \R$ associated to $u$ as 
\begin{equation}
	\label{coord_function}
\begin{cases}
x_1(t)=(j-Nt)(u_j-u_{j+1})+u_{j+1} \quad&\text{if } t\in \left[\frac{j-1}{N},\frac{j}{N} \right[\text{ and } 1\leq j\leq N-1,\\
x_1(t)=N(1-t)(u_N-u_{1})+u_{1} \quad&\text{if } t\in\left [\frac{N-1}{N},1 \right]
\end{cases}
\end{equation}
and, similarly, a function $x_2:[0,1]\to \R$ associated to $v$. In Figure \ref{abs_ord_percorso} we show the functions $x_1$ and $x_2$ for the example of Figure \ref{ex_percorso}.

\begin{figure}[h!]
	\begin{center}
		\includegraphics[scale=0.45]{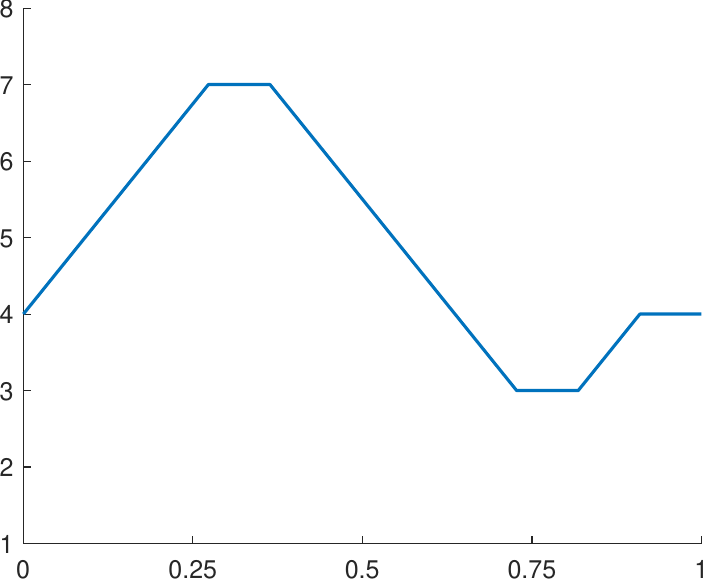}\hspace*{1cm}
		\includegraphics[scale=0.45]{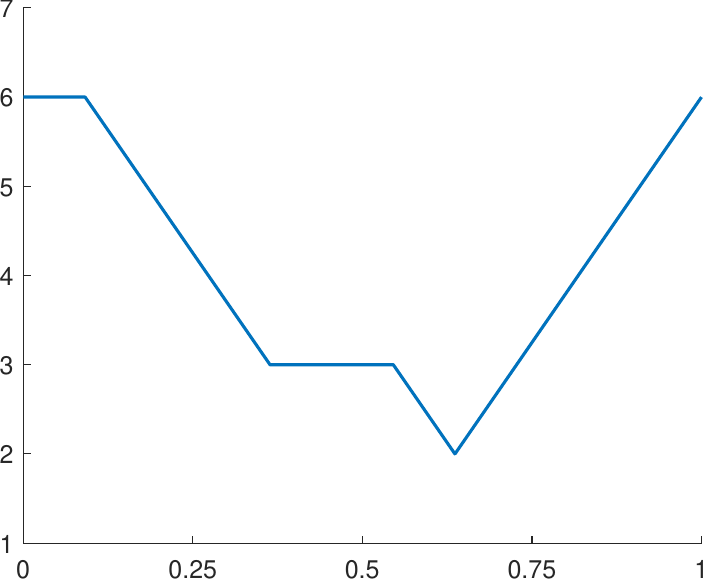}
		 \vspace*{-0.2cm}
	\end{center}
	\caption{The piece-wise {\it linear} functions determined by the abscissas and ordinates of the curve of Figure \ref{ex_percorso} on the left and on the right, respectively. \vspace*{-0.3cm}}
	\label{abs_ord_percorso}
\end{figure}

Next we approximate $x_1$ and $x_2$ by means of an operator $S_n$ for some $n\in \N^+$, i.e. we consider $S_n x_1$ and $S_n x_2$ (we proceed, in particular, as explained in Section \ref{sec_exm} if $S_n$ is not defined for functions with domain $[0,1]$). Finally, putting what we need together, the curve $(S_n x_1,S_n x_2)$ constitutes an approximation of $(x_1,x_2)$ and hence of the image curve after that the discretization is made. 

The entire process is illustrated in Figure \ref{ex_leaf} with an image (shown in (a)) containing a curve with an higher number of points. In particular, following the steps above, we extracted the point coordinates according to the path made by the direction ordering and we defined the abscissas and ordinates functions $x_1$ and $x_2$, which are displayed in blue in Figure \ref{ex_leaf}(c-d). We employed the generalized sampling operator $\mathcal{S}^F_{n}$ with Fejér kernel and $n=100$ to approximate $x_1$ and $x_2$ (the functions $\mathcal{S}^F_n x_1$ and $\mathcal{S}^F_n x_2$ are shown in Figure \ref{ex_leaf}(c-d) in red). 
Figure \ref{ex_leaf}(b) is an image containing the original curve in black and the approximating curve $(\mathcal{S}^F_n x_1,\mathcal{S}^F_n x_2)$ in red after discretization.

\begin{figure}[b]
	\begin{center}
		\includegraphics[scale=0.45]{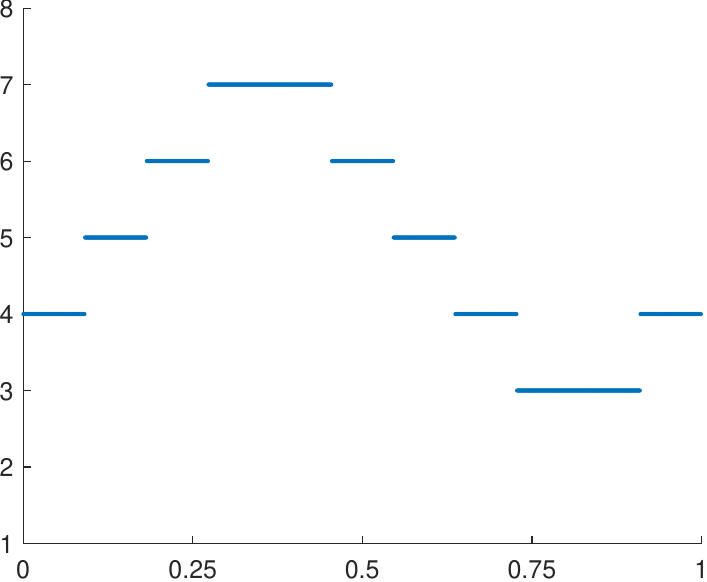}\hspace*{1cm}
		\includegraphics[scale=0.45]{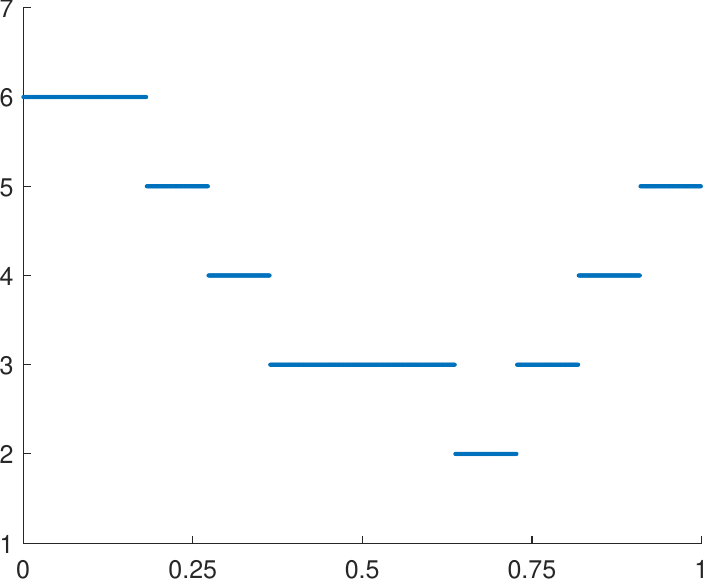}
		 \vspace*{-0.2cm}
	\end{center}
	\caption{The piece-wise {\it constant} functions of abscissas and ordinates of the curve of Figure \ref{ex_percorso} on the left and on the right, respectively.}
	\label{abs_ord_cost_percorso}
\end{figure}

\begin{figure}[h!]
	\begin{center}
		\begin{subfigure}[b]{0.4\textwidth}
			\fbox{\includegraphics[scale=0.75]{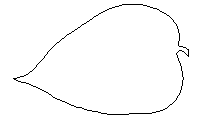}}
			\caption{}
		\end{subfigure}
	\hspace*{1.2cm}
	\begin{subfigure}[b]{0.4\textwidth}
		\fbox{\includegraphics[scale=0.75]{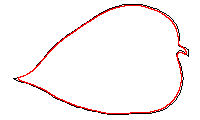}}
		\caption{}
	\end{subfigure}
\end{center}
		\begin{subfigure}[b]{1\textwidth}
			\begin{center}
			\includegraphics[scale=0.3]{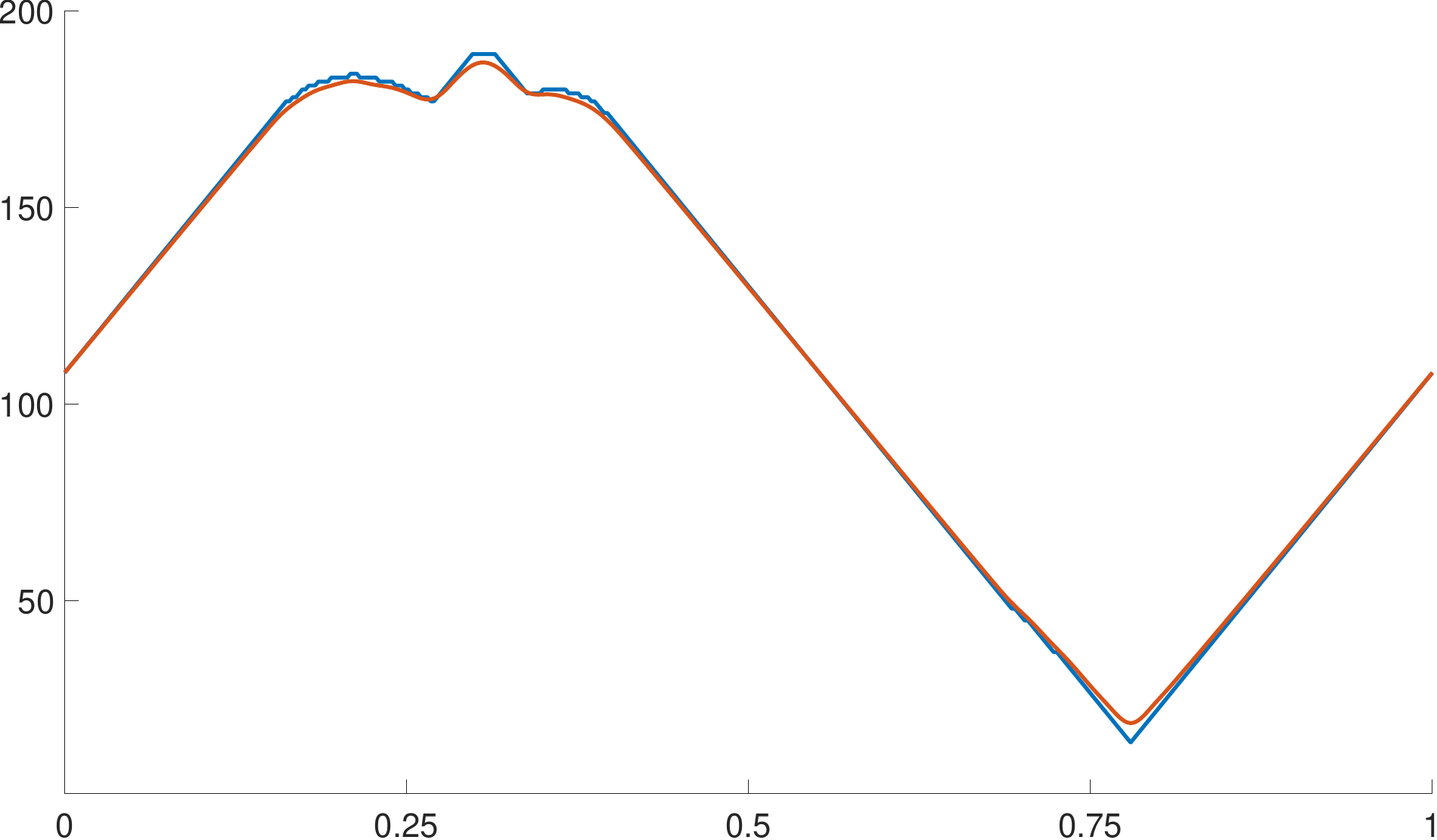}
				\vspace*{-0.2cm}
			\end{center}
			\caption{}
		\end{subfigure}

		\begin{subfigure}[b]{1\textwidth}
			\begin{center}
			\includegraphics[scale=0.3]{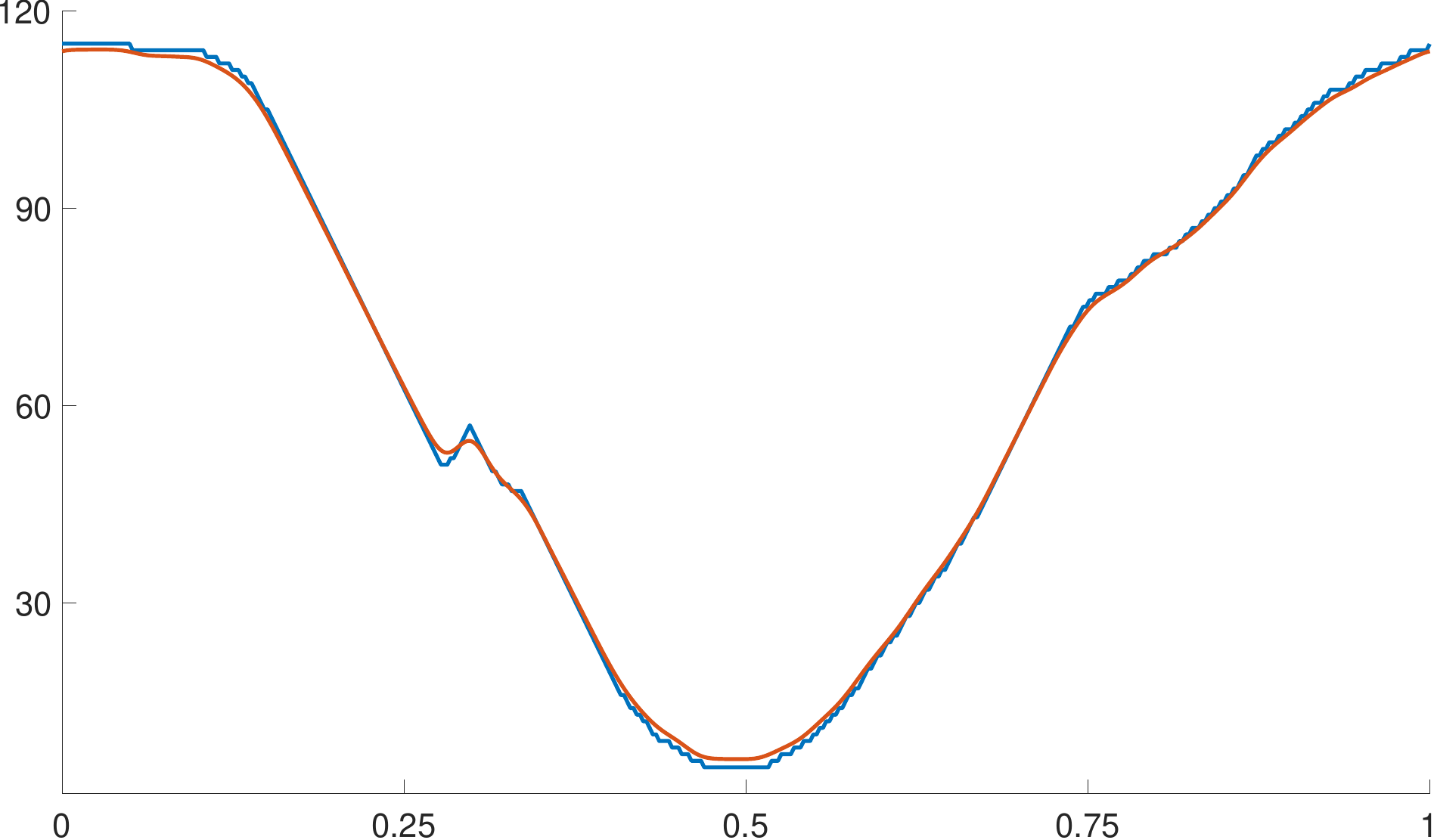}
				\vspace*{-0.2cm}
		\end{center}
			\caption{}
		\end{subfigure}
	
		\caption{An image with a curve is shown in (a) (the resolution of the image is $200\times120$). The functions $x_1$ and $x_2$ giving the abscissas and ordinates of the points of the curve are plotted, in blue, in (c) and (d), respectively. The functions in red in (c) and (d) are, instead, the corresponding approximations $\mathcal{S}^F_{100} x_1$ and $\mathcal{S}^F_{100} x_2$ obtained by the generalized sampling operator $\mathcal{S}^F_{100}$ with Fejér kernel. The discretization of the curve $(\mathcal{S}^F_{100} x_1,\mathcal{S}^F_{100} x_2)$ is shown, in red color, in (b) and is overlaid on the original curve. } 
		\label{ex_leaf}
\end{figure}

\begin{rem}
	A variation of the method consists in defining $x_1$ and $x_2$ as piece-wise constant functions. More precisely, if $u=(u_j)_{j=1}^N$ is the sequence of abscissas of a closed curve, then we define $x_1:[0,1]\to \R$ associated to $u$ as 
	$$
	x_1(t)=u_j \quad\text{if } t\in \left[\frac{j-1}{N},\frac{j}{N} \right[\text{ and } 1\leq j\leq N-1, \qquad x_1(t)=u_N \quad\text{if } t\in\left [\frac{N-1}{N},1 \right]
	$$ 
	and $x_2$ in a similar way in terms of $v=(v_j)_{j=1}^N$ (Figure \ref{abs_ord_cost_percorso} shows the new cases for the same example of Figure \ref{ex_percorso}). 
As before we consider $S_n x_1$ and $S_n x_2$, with $n$ large enough, as approximations of $x_1$ and $x_2$, respectively. Anyway, since in this case $x_1$  and $x_2$ are piece-wise constant, according to Corollary \ref{cor_appr_curves} the approximation holds point-wise except in a finite number of points, of the form $\dsp \frac j N$. Consequently, this happens also for the convergence of $S_n\gamma(t)=((S_nx_1)(t),(S_nx_2)(t))$ to $(x_1(t),x_2(t))$ (note that, even though $x_1$ and $x_2$ are piece-wise constant,  $S_n\gamma:[0,1]\to \R^2$ is still a curve by Corollary \ref{cor_appr_curves}). In particular, the values $\dsp \lim_{n\to +\infty} (S_n x_1)\left (\frac j N\right )$ and $\dsp \lim_{n\to +\infty} (S_n x_2)\left (\frac j N\right )$ may differ from the values of $x_1$ and $x_2$ in a neighborhood of $\frac j N$. By the way, this does not necessarily constitute an issue for the application. 
Indeed, there are some discrete operators $S_n$ with the following properties: if $t_0$ is a jump discontinuity for a function $f$, then $S_n f(t_0)$ converges, as $n\to +\infty$, to an intermediate value between $\dsp \lim_{t\to t_0^+}f(t)$ and $\dsp \lim_{t\to t_0^-}f(t)$ (an example is the generalized sampling operator with some hypothesis about the kernel, see \cite[Theorem 2]{BRS}). Thus, considering these operators and since the jumps of $x_1$ are equal to $1$, the value of $\dsp \lim_{n\to +\infty} (S_n x_1)\left (\frac j N\right )$ after rounding off becomes $u_j$ or $u_{j+1}$. A similar statement holds for $S_nx_2$. In conclusion, defining $x_1$ and $x_2$ as piece-wise constant (and not piece-wise linear) functions, the error between the image curve and the approximation curve for $n$ large enough is, at most, of one pixel of distance. 
\end{rem}

Following the same idea of approximating the coordinates functions, another application can be made: the approximation of a curve from few given points. Let us consider the problem, as in Figure \ref{ex_squirrel_a}, of having only few points of a curve and, in addition, an ordering of them (which follows the path of the original contour). Even though the points are separate, a piece-wise linear curve can be defined connecting the points in the given ordering. The abscissa and ordinate functions for such a possible curve can be still defined as in \eqref{coord_function}. The piece-wise linear curve may give a good representation of the original contour, but it may be at the same time undesired, since it is not smooth. Thus, we can consider an approximation operator $S_n$, so that $(S_n x_1,S_n x_2)$, for $n$ large, determines a curve approximating the piece-wise linear contour and more regular (like in Figure \ref{ex_squirrel_b}). 

\begin{figure}[h!]
	\begin{center}
		\hspace*{1cm}
		\begin{subfigure}{0.45\textwidth}
			\includegraphics[scale=0.32]{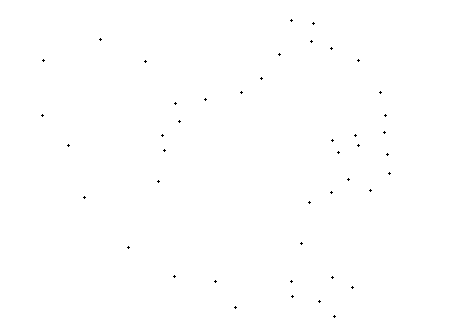}
			\caption{}
			\label{ex_squirrel_a}
		\end{subfigure}
		\begin{subfigure}{0.45\textwidth}
			\includegraphics[scale=0.32]{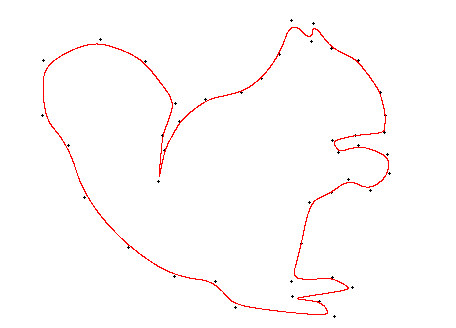}
			\caption{}
			\label{ex_squirrel_b}
		\end{subfigure}
		\caption{Approximation of a curve from a sequence of few points. (a) A set of points (a given ordering is also defined). (b) The same set of points and a curve created by applying a generalized sampling operator. } 
		\label{ex_squirrel}
	\end{center}
\end{figure}

\subsection{Reconstruction of curves in images}

The technique described in the previous section leads to move from a discrete object (the curve in an image) to a continuous object (the curve $S_n \gamma$). This allows to make some operations on the continuous object that give better results in comparison to working with the discrete initial object. For example, once we approximate $\gamma$ with $S_n \gamma$, we can dilate $S_n \gamma$ and then convert it to a curve in an image by discretization. The result is a larger image which is not a simple scaling of the original image. Speaking in the imaging language, the operator $S_n$ can be also used to reconstruct the curves in images with an higher resolution. 

In Figure \ref{ex_leaf_resol} we show the results of this procedure. The curve $\gamma$ of the same image case of Figure \ref{ex_leaf} is shown in Figure \ref{ex_leaf_resol_orig}. The resolution of the image is $200\times 120$. The image scaled by a factor equals to $2$ is displayed in Figure \ref{ex_leaf_resol_dil} (the effect is that of a zoom, the resolution is not changed). The generalized sampling operator $\mathcal{S}^{M_3}_{n}$ with B-spline kernel of order $3$ and $n=200$ was applied to approximate $\gamma$ as explained in the previous section. The obtained curve $\mathcal{S}^{M_3}_{n} \gamma$ is multiplied by $2$ and then discretized (i.e., the values of the coordinates are rounded off towards the nearest integer). 
The final result is in Figure \ref{ex_leaf_resol_ris} and, by effect of the multiplication, the curve is contained in an image with increased resolution ($400 \times 240$). Note the difference of quality of the images from a comparison between Figures \ref{ex_leaf_resol_dil} and \ref{ex_leaf_resol_ris}.

\begin{figure}[h!]
	\begin{center}
		\begin{subfigure}{0.4\textwidth}
			\hspace*{1cm}
			\includegraphics[scale=0.5]{Leaf.png}
			\caption{}
			\label{ex_leaf_resol_orig}
		\end{subfigure}\\
	\end{center}
	\begin{center}
	\begin{subfigure}{0.47\textwidth}
		\includegraphics[scale=1]{Leaf.png}
		\caption{}
		\label{ex_leaf_resol_dil}
	\end{subfigure}
		\begin{subfigure}{0.47\textwidth}
			\includegraphics[scale=0.5]{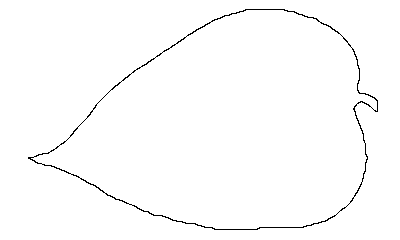}
			\caption{}
			\label{ex_leaf_resol_ris}
		\end{subfigure}
		\caption{In (a): the curve in a image. In (b): the image in (a) with doubled scale. In (c): the curve in (a) reconstructed by a generalized sampling operator and dilated (the image has the same scale of (a) but doubled resolution). } 
		\label{ex_leaf_resol}
	\end{center}
\end{figure}
\vspace*{-0.5cm}
Other operations that can be performed after the approximating curve $S_n \gamma$ is obtained are, for examples, translations, rotations or dilations along specific directions, projections. Working on an auxiliary curve of the real plane as $S_n \gamma$, defined by a continuous and not discrete variable, allows to make operations with better results.

\bigskip
\bigskip
\bigskip
{\bf Authors contribution:} 
Rosario Corso: conceptualization, formal analysis, investigation, methodology, software, supervision, visualization, writing. Gabriele Gucciardi: formal analysis, investigation, software.

\bigskip
{\bf Acknowledgments:} 
R.C. was partially supported by the ``Gruppo Nazionale per l'Analisi Matematica, la Probabilità e le loro Applicazioni'' (INdAM) and this work has been done within the activities of the ``Gruppo UMI - Teoria dell’Approssimazione e Applicazioni''.

\end{document}